\newcommand{\TeXmacs}{T\kern-.1667em\lower.5ex\hbox{E}\kern-.125emX\kern-.1em\lower.5ex\hbox{\textsc{m\kern-.05ema\kern-.125emc\kern-.05ems}}}
\newcommand{\assign}{:=}
\newcommand{\tmdummy}{$\mbox{}$}
\newcommand{\tmmathbf}[1]{\ensuremath{\boldsymbol{#1}}}
\newcommand{\tmop}[1]{\ensuremath{\operatorname{#1}}}
\theoremstyle{plain}
\newtheorem{theorem}{Theorem}[section]
\newtheorem{proposition}[theorem]{Proposition}
\newtheorem{lemma}[theorem]{Lemma}
\newtheorem{corollary}[theorem]{Corollary}
\theoremstyle{definition}
\newtheorem{definition}{Definition}[section]
\theoremstyle{remark}
\newtheorem{remark}[theorem]{Remark}
\newtheorem{example}[theorem]{Example}
\newtheorem{examples}[theorem]{Examples}
\numberwithin{equation}{section}
\newcommand{\Z}{\mathbb{Z}}
\newcommand{\Hom}{\mathbf{Hom}}
\newcommand{\catname}[1]{\textbf{#1}}
\newcommand{\Ob}{\mathbf{Ob}}
\newcommand{\cal}[1]{\ensuremath{\mathcal{#1}}}
\newcommand{\mc}[1]{\ensuremath{\mathcal{#1}}}
\newcommand{\s}[1]{\ensuremath{\mathscr{#1}}}
\newcommand{\site}[1]{\ensuremath{\tilde{\mathcal{#1}}}}
\newcommand{\cat}[1]{\ensuremath{\mathcal{#1}}}
\newcommand{\cov}{\ensuremath{\text{Cov}}}
\newcommand{\mcO}{\ensuremath{\cat{O}}}
\newcommand{\mcM}{\ensuremath{\cat{M}}}
\newcommand{\mcC}{\ensuremath{\cat{C}}}
\newcommand{\mcS}{\ensuremath{\cat{S}}}
\newcommand{\mcU}{\ensuremath{\cat{U}}}
\newcommand{\mcA}{\ensuremath{\cat{A}}}
\newcommand{\T}{\mathbb{T}}
\DeclareMathOperator{\sheafify}{\text{\calligra \footnotesize S\normalsize heaf}}
\DeclareMathOperator{\sheafhom}{\mathscr{H}\text{\kern -4pt {\calligra\large om}}\,}
\thanks{This material is based upon work supported by the US National Science
	Foundation under Grant No. DMS-1928930 while the author was in residence at the Simons Laufer Mathematical Sciences Institute (formerly MSRI) in Berkeley, California, as well as when the author participated in a Mathematical Sciences Research Institute program held in the summer of 2022 in partnership with the the Universidad
	Nacional Aut{\'o}noma de M{\'e}xico. This work was also supported by
	the SECIHTI Investigadoras y Investigadores por M{\'e}xico Project \#1076, the SECIHTI Ciencia de Fronteras grant CF-2019-217392, and
	by the grant N62909-19-1-2134 from the US Office of Naval Research Global and
	the Southern Office of Aerospace Research and Development of the US Air Force
	Office of Scientific Research.}
\title[Sheaf Theory for Data and Graphs: An Approach through \v Cech Closure Spaces]{Grothendieck Topologies and Sheaf Theory for Data and Graphs: An Approach through \v Cech Closure Spaces}
\author{Antonio Rieser}
\address{Área de Matemáticas Básicas, Centro de Investigación en Matemáticas, A.C., Calle Jalisco S/N, GTO, 36023, México}
\email{antonio.rieser@cimat.mx}
\begin{document}

\begin{abstract}
	We initiate the study of sheaves on \v Cech closure spaces, providing a new, unified approach to sheaf theory on many of the major classes of spaces of
	interest to applications: topological spaces, finite simplicial complexes (seen as $T_0$ topological spaces), graphs and digraphs (both seen as closure spaces), quivers (seen as a pair of closure spaces), and metric spaces decorated with
	a privileged scale, the latter of which are widely used in topological data analysis. Our construction proceeds by constructing a
	Grothendieck topology on the category $\cat{M}_{c_X}$ of finite intersections of subspaces of $(X,c_X)$ with non-empty $c_X$-interior, which is the natural generalization to closure spaces of the category $\mathcal{O}(X,\tau)$ of open sets in a topological space. 
	We continue by constructing the sheaf and \v Cech cohomologies on $\cat{M}_{c_X}$, and we then identify examples of non-topological closure spaces induced by graphs with non-trivial sheaf cohomology, in particular in dimension two.
\end{abstract}



\maketitle

\section{Introduction}

Starting from its origins in algebraic topology, sheaf theory has become an
indispensable part of homological algebra, and it has many important
applications in fields as diverse as algebraic geometry and partial
differential equations. In the past decade, particularly with the rise of
topological data analysis, there has been increased interest in extending the
reach of sheaf theory to scientific and engineering applications,
and a number of intriguing efforts have been made in that direction
{\cite{Baryshnikov_Ghrist_2009,Curry_Ghrist_Robinson_2012,Curry_2015,Ghrist_Krishnan_2013, Hylton_etal_2020, Hylton_etal_2022,Robinson_2012,Robinson_2017,Robinson_2019, Short_etal_2022, Short_etal_2021}}.
The most common construction of this kind in the literature involves
studying sheaves on a combinatorially defined space which may be given a topology, such as a simplicial
complex whose set of simplices is endowed with a $T_0$ topology, either in the version on cell complexes first
developed in {\cite{Shepard_1985}} and revived in {\cite{Curry_2014}}, where
the $T_0$ topology need not be used explicitly to develop much of the theory, or else on spaces of posets as in {\cite{Robinson_2017}}, where the $T_0$ topology takes on a more central role. Related constructions of sheaf theory on simplicial complexes can also be found in Section 8.1 of
{\cite{Kashiwara_Schapira_1994}}.

In this article, we introduce a new approach to the construction of sheaf
theory on discrete and combinatorial spaces by constructing sheaves on {\v
C}ech closure spaces, a category which contains the major classes of spaces of interest to applications: topological spaces, including finite simplicial complexes with a $T_0$ topology on the collection of simplices, graphs and digraphs (where the vertices are seen as a closure space and the closure structure is induced by the edges), quivers (where the vertices and edges are given compatible closure structures), and metric spaces
decorated with a privileged, non-zero scale, which induces a special class of semi-pseudometric spaces of interest to topological data analysis. The construction given here is a generalization to closure spaces of sheaf theory on the standard Grothendieck topology on topological spaces, and, as such, also includes cellular sheaves \cite{Curry_2014, Curry_2015} as a special case. Unlike in the applications of cellular sheaves to
point clouds, however, our construction allows one to define sheaves
directly on the set of points, thus eliminating the need to pass to an ancillary
simplicial complex. For quivers, Grothendieck topologies have been considered on the path category of a quiver in \cite{Murfet_2005}, and a Grothendieck category was created from colored quivers in 
\cite{Kanda_2020} in order to answer an algebraic question about categories with enough compressible objects. Both constructions are quite different from the one pursued here, however, as the Grothendieck topologies in these papers are constructed on auxiliary objects built using the quivers, and not on the quivers themselves. Our construction also provides, to the best of our knowledge, the first construction of sheaf theory for directed graphs, since cellular sheaf theory does not apply to this case.

The algebraic topology of {\v C}ech closure
spaces appears to have been first studied in {\cite{Demaria_1987,Demaria_Garbaccio_1984_2}}, where it was used to construct new invariants of graphs and digraphs. After a long period of inactivity, this idea was taken up again
in {\cites{Rieser_2021,Rieser_arXiv_2022}}, where the
development of homotopy theory on {\v C}ech closure spaces was continued, and where it was shown that
this framework also covers the case of metric spaces which are endowed with a
preferred scale, the scale indicating the minimal radius of a metric ball around a point which must be
contained in any neighborhood of that point. Further work on the algebraic topology of closure spaces led to a unified view of a number of discrete homotopy theories in
{\cite{Bubenik_Milicevic_2024}}, where it was shown that many of the
discrete homotopy theories on studied on graphs
{\cite{Barcelo_Capraro_White_2014,Babson_etal_2006,Grigoryan_etal_2014}}
can be expressed as a homotopy theory on {\v C}ech closure spaces
simply by changing the cylinder functor and the product used to define
homotopy. Although not mentioned in {\cite{Bubenik_Milicevic_2024}},
both digital homotopy {\cite{Ayala_etal_2003,Boxer_2005}} and the discrete
homotopies studied in {\cite{Plaut_Wilkins_2013}} may also be expressed in
terms of homotopies on appropriate closure spaces as well. It was additionally shown in \cite{Bubenik_Milicevic_2024} that 
stability theorems for persistent versions of a functor (persistent homology, persistent cohomology, persistent homotopy, etc) follow directly from 
the closure space homotopy invariance of that functor, demonstrating the importance of the algebraic topology of closure spaces to more traditional
approaches to applied
topology. Finally, the category of pseudotopological spaces, the Cartesian closed hull of both the category of topological spaces and \v{C}ech closure spaces, was shown to admit a Quillen-type model structure in \cite{Rieser_arXiv_2022}, and this model structure was further shown in \cite{Ebel_Kapulkin_arXiv_2023} to be Quillen equivalent to the Quillen model structure on topological spaces.

There are a number of obstacles which must be overcome in order to develop sheaf theory on general {\v C}ech closure spaces. First, in most
interesting closure spaces, there are simply not enough open sets for the
classical, topological sheaf cohomology to provide much information. We see
this already with the closure space $(V, c_G)$ induced by a connected graph $G
= (V, E)$. In this case, the open sets of $(V,c_G)$ are simply the indiscrete
topology on the vertex set, i.e. the topology consisting only of $\{\varnothing,
V\}$, and, like the topology, the resulting sheaf theory is trivial. In closure spaces, however, the interior of a set
may be non-open, and there are many more non-open sets with non-trivial (and non-open) interiors than there are open sets. Using these, we may form so-called
interior covers, which provides an apparent solution to the problem of the lack of open sets.
However, this leads to a second issue, which is that the
category $\cat{N}_{c_X}$ of subsets of $(X,
c_X)$ with non-empty interior is not closed under finite intersections. Naively, one may try to simply add the missing sets to $\cat{N}_{c_X}$, and so we might posit that the category in which
we should work is not $\cat{N}_{c_X}$, but rather the category which
contains $\cat{N}_{c_X}$ together with all of the finite intersections of its elements, which we denote by $\cat{M}_{c_X}$. However, while the sets in
$\cat{M}_{c_X}$ now form a topology on $X$, this topology is often too
fine to be interesting. That is, if we build sheaves using open covers of $X$ in the topology given by $\cat{M}_{c_X}$, then much of the structure we wish to capture may be lost. We instead consider a kind of hybrid of these two options. We begin with presheaves defined on the category  whose objects are the subspaces of $(X,c_X)$ induced by inclusions of the sets $\cat{M}_{c_X}$ into $(X,c_X)$, and whose morphisms are inclusions of subspaces. (We also call this category $\cat{M}_{c_X}$ in a slight abuse of notation.) We then use interior covers of the subspaces in $\cat{M}_{c_X}$ to form a site, which, in turn, generates our Grothendieck topology. 

After constructing the Grothendieck
topology on $\cat{M}_{c_X}$, the construction of sheaf and {\v C}ech cohomology on closure spaces follows from general considerations, with the \v Cech cohomology being a generalization of  the construction in \cite{Palacios_Lic_2019} for constant coefficients. We appeal to several results which follow from the convergence of the spectral sequence from {\v
C}ech cohomology to sheaf cohomology on a Grothendieck topology in order to produce examples of non-topological closure
spaces with non-trivial sheaf cohomology. In particular, we produce a collection of closure spaces induced by graphs which we show to have non-trivial sheaf cohomology in dimension two, demonstrating the non-topological nature of the cohomology.

\section{{\v C}ech Closure Spaces}\label{sec:Closure spaces}

In this section, we introduce \v Cech closure spaces, show how topological spaces, graphs, and metric spaces with a privileged scale all form examples of closure spaces, how quivers induce induce pairs of topological spaces, and we collect several basic results about closure spaces which we will use later.

\begin{definition}
  Let $X$ be a set, and let $c : \mathcal{P} (X) \to \mathcal{P} (X)$ be a map
  on the power set of $X$ which satisfies
  \begin{enumerate}
    \item $c (\varnothing) = \varnothing$
    
    \item $A \subset c (A)$ for all $A \subset X$
    
    \item $c (A \cup B) = c (A) \cup c (B)$ for all $A, B \subset X$
  \end{enumerate}
  The map $c$ is called a \emph{{\v C}ech closure operator} (or \emph{closure operator}) on $X$, and the pair $(X, c_X)$ is called a
  \emph{{\v C}ech closure space} (or {\emph{closure space}}).
  
  A function $f:c_X \to c_Y$ is said to be \emph{continuous} iff $f(c_X(A)) \subset c_Y(f(A))$ for every $A\subset X$. 
\end{definition}

\begin{examples}
  \label{ex:Closure spaces}{\tmdummy}
  
  \begin{enumerate}[label=(\alph*),wide,labelwidth=!]
    \item Let $X = \varnothing$, and let $c_\varnothing$ be defined by $c_\varnothing(\varnothing)= \varnothing$. Then
    $(\varnothing,c_\varnothing)$ is a closure space, which we call the \emph{empty closure space}.
    \item If $X$ is a set and $c(A) = A$ for all $A\subset X$, then $c$ is the \emph{discrete closure structure} on $X$. Conversely, if $c(A) = X$ for all $A\subset X$, then $X$ is the \emph{indiscrete closure structure} on $X$.
    \item \label{item:Example topological closure}Let $(X, \tau)$ be a
    topological space with topology $\tau$. For any $A \subset X$, denote by
    $\bar{A}$ the topological closure of $A$. Then $c_{\tau} (A) = \bar{A}$
    is a {\v C}ech closure operator. Note that, in this case, $c_{\tau}^2 (A)
    = c_{\tau} (A)$. Closure operators $c$ with the property that $c^2 = c$
    are called {\emph{Kuratowski}} or {\emph{topological closure operators}},
    and it can be shown that, for Kuratowski closure operators, the collection
    \[ \mathcal{O}(X) \assign \{X \setminus c (A) \mid A \subset X\} \]
    forms the open sets of a topology on $X$. (See {\cite{Cech_1966}}, Theorem
    15.A.2(a) for a proof.) Furthermore, a map $f:(X,\tau) \to (Y,\tau')$ between topological spaces
    is topologically continuous iff it is continuous as a map $f:(X,c_\tau) \to (Y,c_{\tau'})$  
    between the induced closure spaces. (\cite{Cech_1966}, Theorem 16.A.10)
    
    \item \label{item:Mesoscopic space} Let $(X, d)$ be a metric space, and $r \geq 0$ a non-negative real
    number. For any $A \subset X$, define
    \begin{equation}\label{eq:Closure with scale} c_r (A) \assign \{x \in X \mid d (x, A) \leq r\} . \end{equation}
    Then $c_r$ is a closure operator on $X$. For $r = 0$, $(X, c_0)$ is
    toplogical by the discussion in Example \ref{item:Example topological closure} above, and if $r > 0$, we call
    $(X, c_r)$ a {\emph{mesoscopic space}}. Functions between closure spaces of the form $(X,c_p)$ and $(Y,c_q)$ admit the following convenient characterization of continuity, as shown in \cite{Rieser_2021}.
    
    \begin{proposition}[\cite{Rieser_2021}, Proposition 3.5]
    	\label{prop:p-q cont}
    	Let $p,q\geq 0$ be non-negative real numbers, and let $(X,d_X)$ and $(Y,d_Y)$ be metric spaces with closure structures $c_p$ and $c_q$, respectively, where the $c_p$ and $c_q$ are as in Equation (\ref{eq:Closure with scale}). Then a function $f:(X,c_p)\to
    	(Y,c_q)$ is continuous iff for every $\epsilon > 0$ and every $x \in X$, there exists a $\delta_{x,\epsilon} > 0$ such that
    	\begin{equation} \label{eq:p-q cont}
    	d_X(x,x') < p + \delta_{x,\epsilon} \implies d_Y(f(x),f(x')) < q + \epsilon.
    	\end{equation}
    \end{proposition}

\begin{definition}
	When a function $f:(X,d_X) \to (Y,d_Y)$ between metric spaces satisfies Equation (\ref{eq:p-q cont}) in \autoref{prop:p-q cont} above, we say that $f$ is \emph{$(p,q)$-continuous}.
\end{definition}

    \item Let $G = (V, E)$ be a reflextive graph with vertices $V$ and edges $E$. (Recall that a graph $G= (V,E)$ is reflexive iff, for each vertex $v$ 
    there is an edge $(v,v) \in E$.) We
    define a closure operator $c_G : \mathcal{P} (V) \to \mathcal{P} (V)$ in
    the following way. First, let $s : V \to \mathcal{P} (V)$ be the \emph{star} of  a vertex $v$, i.e. the map
    \[ s (v) \assign \{v' \in V \mid (v, v') \in E\} . \]
    For an arbitrary $A \subset V$, we now define the operator $c_G$ by
    \[ c_G (A) = \bigcup_{v \in A} s(v) . \]
    Then $c_G$ is a closure operator, and $(V, c_G)$ is a closure space, which
    we call the {\emph{closure space induced by the graph $G$}}. Addtionally, a map $f:(V,c_G) \to (V',c_{G'})$ is continuous
    iff $f$ is a graph homomorphism. (This follows directly from \autoref{prop:p-q cont} by taking $p=q=1$ and viewing a graph as a metric space with the shortest path metric, noting that $c_1 = c_G$.) 
    
    Given a non-reflexive graph $G = (V,E)$, we construct the closure space $(V,c_G)$ by first adding the diagonal elements $(v,v) \subset V \times V$ to $E$, and then defining $c_G$ with respect to the graph with all edges $(v,v)$ added.  
      
    \item\label{ex:Quiver 1} Recall that a \emph{quiver} is a directed graph which possibly contains multiple edges between any two vertices. We formalize this defining a quiver as a quadruple $Q = (V,E,s,t)$ where $V$ is the set of vertices, $E$ is the set of edges (but E is not necessarily a subset of $V \times V$, unlike in a graph), and $s,t:E \to V$ are the \emph{source} and \emph{target} maps, respectively. A quiver induces a directed graph $G = (V,E_Q)$ on the vertices $V$ by $(v,v') \in E_Q$ iff $\exists e \in E$ with $s(e)= v$ and $t(e) = v'$. Let $(V,c_G)$ denote the induced closure space. We then define closure structures $c_s,c_t: \cat{P}(E) \to \cat{P}(E)$ on $E$ by
    \begin{align*}
    	c_s(A) & = s^{-1}(c_G(s(A))),\\
    	 c_t(A) & = t^{-1}(c_G(t(A))),
    \end{align*}
    for any $A \subset E$. We now define $c_Q(A) \coloneqq c_s(A) \cap c_t(A)$. Then $c_Q$ is the closure structure projectively generated by the mappings $s,t$ (\cite{Cech_1966}, Theorem 32 A.4), and $s,t:(E,c_Q)\to (V,c_G)$ are continuous as maps of closure spaces. 
    \item\label{ex:Quiver 2} A second closure structure on the edges $E$ of a quiver $Q = (V,E,s,t)$ may be constructed as follows. As before, we generate the directed graph $G = (V,E_Q)$ and the induced closure space $(V,c_G)$ from the quiver. We then define $c_{Q,-}:\cat{P}(E) \to \cat{P}(E)$ as follows. First, let 
    \begin{align*}
    	c_{s,-}(A) &= A \cup s^{-1}(c_G(s(A)) - s(A))\\
    	c_{t,-}(A) &= A \cup t^{-1}(c_G(t(A)) - t(A)),
    \end{align*}
    and define $c_{Q,-}(A) \coloneqq c_{s,-}(A) \cap c_{t,-}(A)$.
    One may then confirm that $s,t:(E,c_{Q,-}) \to (V,c_G)$ is continuous, as $c_{Q,-}$ is finer than the structure $c_Q$ in the example above. Unlike $c_Q$, in the structure $c_{Q,-}$, the closure of an edge does not include the other edges with the same source and target, which may be of interest when using these spaces to investigate different paths through the quiver.
    \end{enumerate}
\end{examples}

\begin{definition}
  Let $(X, c_X)$ be a closure space. For any $A \subset X$, we define the
  {\emph{interior of A}} by
  \[ i_X (A) \assign X - c_X (X - A), \]
  and we say that $U \subset X$ is a {\emph{neighborhood of $A$}} iff $A
  \subset i_X (U)$. We will call $i_X : \mathcal{P} (X) \to \mathcal{P} (X)$ the
  {\emph{interior operator of $(X, c_X)$.}}
\end{definition}

The next three propositions enumerate the essential properties of interior operators.

\begin{proposition}
  [{\cite{Cech_1966}}, 14 A.11]\label{prop:Interior properties} Let $(X, c_X)$
  be a closure space. The interior operator $i_X : \mathcal{P} (X) \to
  \mathcal{P} (X)$ sat{\emph{}}isfies the following
  \begin{enumerate}
    \item $i_X (X) = X$
    
    \item $i_X (A) \subset A \text{ for all } A \subset X$
    
    \item \label{item:Interior 3}$i_X (A \cap B) = i_X (A) \cap i_X (B)  \text{ for
    all } A, B \subset X$
  \end{enumerate}
\end{proposition}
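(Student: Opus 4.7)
The plan is to derive all three properties by unwinding the definition $i(A) = X - c(X-A)$ and applying the three axioms of a \v Cech closure operator together with De Morgan's laws. Each part corresponds to exactly one closure axiom, which makes the structure of the proof very clean.

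For (1), I would compute $i(X) = X - c(X - X) = X - c(\emptyset)$ and then apply axiom (1) of the closure operator, which gives $c(\emptyset) = \emptyset$, so $i(X) = X$. For (2), the inclusion $i(A) \subset A$ is equivalent to $X - A \subset c(X - A)$, which is a direct instance of axiom (2) applied to the subset $X - A$. These two parts are immediate and require no real work.

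For (3), the key observation is that the complement of an intersection is the union of complements, so
\begin{equation*}
    i(A \cap B) = X - c\bigl(X - (A \cap B)\bigr) = X - c\bigl((X - A) \cup (X - B)\bigr).
\end{equation*}
Applying axiom (3) of the closure operator to split the closure of the union, I would then have
\begin{equation*}
    i(A \cap B) = X - \bigl(c(X - A) \cup c(X - B)\bigr),
\end{equation*}
and a second application of De Morgan rewrites the right-hand side as $\bigl(X - c(X-A)\bigr) \cap \bigl(X - c(X-B)\bigr) = i(A) \cap i(B)$.

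There is no real obstacle here: the proposition is essentially a dictionary translation between the axioms imposed on $c$ and their dual formulations for $i$. If anything merits a word of care, it is keeping track of De Morgan duality to make sure that the additivity axiom $c(A \cup B) = c(A) \cup c(B)$ transforms correctly into the multiplicativity axiom $i(A \cap B) = i(A) \cap i(B)$, but this is entirely mechanical.
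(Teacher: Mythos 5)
Your proof is correct and complete: each part follows by unwinding $i(A) = X - c(X-A)$, applying the corresponding closure axiom, and (for part (3)) De Morgan duality, which is exactly the standard argument given in the cited reference (\cite{Cech_1966}, 14 A.11) that the paper defers to without reproducing. No gaps.
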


The following two corollaries follow directly from Item (\ref{item:Interior 3})

\begin{corollary}
	\label{prop:Interior of subsets}Let $(X, c_X)$ be a {\v C}ech closure space,
	and suppose that $B \subset A \subset X$. Then $i_X (B) \subset i_X (A)$.
\end{corollary}

\begin{corollary}
	\label{cor:Intersection of neighborhoods}Let $(X, c_X)$ be a {\v C}ech
	closure space. For any two neighborhoods $U$ and $V$ of a point $x \in X$,
	the intersection $U \cap V$ is also a neighborhood of $x$.
\end{corollary}

\begin{proof}
	By hypothesis, $x \in i_X (U) \cap i_X (V)$, and by Item (3) of Proposition
	\ref{prop:Interior properties}, $i_X (U) \cap i_X (V) = i_X (U \cap V)$.
\end{proof}

The following definition describes the subsapce closure structure on a subset of a closure space.

\begin{definition}
	Given a closure space $(X,c_X)$ and a subset $U \subset X$, we define
	the \emph{subspace closure operator} by $c_U(A) \coloneqq c_X(A) \cap U$ for all $A \subset U$. The closure space $(U,c_U)$ is a \emph{subspace of $(X,c_X)$}, which we denote $(U,c_U) \subset (X,c_X)$.
\end{definition}

We will also need the following result on neighborhoods in subspaces, which we quote from \cite{Cech_1966}.

\begin{proposition}[\cite{Cech_1966}, Theorem 17.A.9(a,b)]
	\label{prop:Interior of subspaces}
	Let $(X,c_X)$ be a \v Cech closure space~, and let $(U,c_U) \subset (X,c_X)$ be a subspace of $(X,c_X)$.
	\begin{enumerate}[wide,labelindent=0pt,leftmargin=*]
	\item For any $V \subset U$, $i_{U}(V) = i_X(V \cup (X-U)) \cap U$, where $i_{U}$ and $i_X$ are the interior operators for $(U,c_U)$ and $(X,c_X)$, respectively.
	\item \label{item:Subspace neighborhoods} For any $x \in U$, a set $V \subset U$ is a neighborhood of $x$ in $(U,c_U)$ iff there exists a neighborhood $W$ of $x$ in $(X,c_X)$ such that $V = W\cap U$.
	\end{enumerate}
\end{proposition}

We now discuss the different kinds of covers on closure spaces which we will use to construct our Grothendieck topology.

\begin{definition}
	\label{def:Covers} 
	Let $(X, c_X)$ be a closure space.
  \begin{enumerate}[wide,labelwidth=!,labelindent=0pt]
    \item We say that a collection $\mathcal{U}$ of subsets of $X$ is a
    \emph{cover} of $(X, c_X)$ iff
    \[ X = \bigcup_{U_{\alpha} \in \mathcal{U}} U_{\alpha}, \]
    \item \label{item:Interior cover} We say that a collection $\mathcal{U}$ of subsets of $X$ is an
    \emph{interior cover} of $(X, c_X)$ iff
    \begin{equation*}
      X = \bigcup_{U_{\alpha} \in \mathcal{U}} i_X (U_{\alpha}) .
    \end{equation*}
  \end{enumerate}
\end{definition}

An illustrative example of an interior cover on a non-topological closure space is given by the following cover of a closure space induced by a graph.

\begin{example}
	Let $G=(V,E)$ be a (possibly directed) reflexive graph, and let $(V,c_G)$ be the closure space induced by the graph. Let \begin{equation*}
		S(v)\assign \{v'\in V\mid (v',v)\in E\}.
	\end{equation*} 
	Then $\mathcal{S}_G\assign\{S(v)\}_{v \in V}$ is an interior cover on $(V,c_G)$. Note that, when $G$ is undirected, $S(v)$ is the star of the vertex $v$, and when $G$ is directed, $S(v)$ is the star of the vertex $v$ in the graph $G'=(V,E^{-1})$, where $E^{-1} = \{(v',v) \mid (v,v') \in E \}$. Also, this interior cover is maximal among interior covers of $(V,c_G)$ with respect to the preorder given by refinement, i.e. $\mathcal{U} \prec \mathcal{V}$ iff $\mathcal{V}$ refines $\mathcal{U}$. That is, for any interior cover $\mathcal{U}$ of $(V,c_G)$, $\mathcal{S}_G$ refines $\mathcal{U}$.
\end{example}

In \autoref{prop:Interior cover of interior covers} below, we show that 
the union of the interior covers of a collection of subspaces $(U_\alpha,c_{U_\alpha})$, $\alpha \in I_\mathcal{U}$, is an interior cover of $(X,c_X)$ when the set $\mathcal{U} \assign \{U_\alpha \mid \alpha \in I_\mathcal{U}\}$ is an interior cover of $(X,c_X)$. This will be necessary for the construction of our Grothendieck topology. We begin with the following lemma.

\begin{lemma}
  \label{lem:Nesting}Let $(X, c_X)$ be a Cech closure space, and let $A
  \subset X$. Suppose that a collection $\mathcal{U}$ of subsets of $A$ is an
  interior cover of $(A, c_A)$, where $c_A$ is the subspace closure structure
  on $A \subset (X, c_X)$, i.e. $c_A(B) = A \cap c_X(B)$ for any $B \subset A$. Then
  \[ i_X (A) = \bigcup_{U_{\alpha} \in \mathcal{A}} i_X (U_{\alpha}), \]
  where $i_X$ is the interior operator on $(X, c_X)$.
\end{lemma}
\begin{proof}
	Let $U \subset A$, then by \cite{Cech_1966}, Theorem 17.A.9(a), $i_A(U) = A \cap i_X(U \cup (X-A))$. Writing $W \coloneqq X-A$,
	it follows that 
	\begin{equation}
		\label{eq:Interiors}
	\begin{aligned}
		i_X(A) \cap i_A(U)& =  i_X(A) \cap i_X(U \cup W) \cap A
		= i_X(A \cap (U \cup W)) \\
		&= i_X(A \cap U) 
		= i_X(U).
	\end{aligned}
	\end{equation}
 Since $\mathcal{U}$ is an interior cover of $A$, however, we have that
  \begin{align*}
    i_X (A) & =  i_X (A) \cap A
    =  i_X (A) \cap \left( \bigcup_{U \in \mathcal{U}} i_A
    (U) \right)\\
    & =  \bigcup_{U \in \mathcal{U}} i_X (A) \cap i_A
    (U)
     = \bigcup_{U \in \mathcal{U}} i_X (U),
  \end{align*}
  as desired, where the final equality follows from Equation \ref{eq:Interiors}.
\end{proof}

\begin{proposition}
	\label{prop:Interior cover of interior covers}
	Let $(X,c_X)$ be a closure space, and let $\mathcal{U} \assign \{U_{\alpha} \mid \alpha \in I_{\mathcal{U}}\}$ be an interior cover of $(X,c_X)$, where $I_{\mathcal{U}}$ is an index set. For each set $U_{\alpha} \in \mathcal{U}$, let $\mathcal{U}_\alpha \assign \{U_{\alpha\beta} \mid \beta \in I_{\alpha}\}$, be an interior cover of the subspace $(U_{\alpha},c_{U_{\alpha}}) \subset (X,c_X)$, where for each $\alpha \in I_{\mathcal{U}}$, $I_{\alpha}$ is the index set for the cover $\mathcal{U}_\alpha$. Then the collection of sets 
	\[\mathcal{V} \assign \bigcup_{\alpha \in I_{\mathcal{U}}} \mathcal{U}_{\alpha}\]
	is an interior cover of $(X,c_X)$. 
\end{proposition}

\begin{proof}
	We compute
	\begin{align*}
		\bigcup_{V \in \mathcal{V}} i_{X}(V) = \bigcup_{\alpha \in 
				I_{\mathcal{U}}} \bigcup_{\beta \in I_{\alpha}} 
				i_X(U_{\alpha\beta}) 
		= \bigcup_{\alpha \in I_{\mathcal{U}}} i_X(U_{\alpha}) 
		= X,
	\end{align*}
	where the second equality follows from \autoref{lem:Nesting}. Therefore $\mathcal{V}$ is an interior cover of $(X,c_X)$.
\end{proof}

\section{Sheaves on \v Cech Closure Spaces}

In this section, we review the definitions of sites and Grothendieck topologies on a category, following the presentations in \cite[\href{https://stacks.math.columbia.edu/tag/00VG}{Section 00VG},\href{https://stacks.math.columbia.edu/tag/00YW}{Section 00YW},\href{https://stacks.math.columbia.edu/tag/00ZB}{Section 00ZB} ]{stacks-project}, {\cite{Kashiwara_Schapira_2006}}, and \cite{MacLane_Moerdijk_1994}, and we construct a natural site on the category $\cat{M}_{c_X}$, defined below, given a closure space $(X,c_X)$. We then recall that how a site generates a Grothendieck topology, and we define the canonical Grothendieck topology on a closure space. Note that there is some discrepancy in the terminology used in the literature. What is called a \emph{site} in \cite{stacks-project} is called a \emph{category endowed with a pretopology} in \cite[Exposé II, Definition 1.3]{Artin_etal_1972}, a \emph{family of coverings} in \cite{Kashiwara_Schapira_2006},  \emph{the family of coverings of a topology} in \cite{Artin_1962}, a \emph{basis for a Grothendieck topology} in \cite{MacLane_Moerdijk_1994}, and a \emph{topology} in \cite{Tamme_1994}. We follow the terminological conventions of \cite{stacks-project}.

\subsection{Sites on the category $\cat{M}_{c_X}$}

	 We begin by recalling the definition of a site on a category $\mcC$ with pullbacks, and for any object $U \in \mcC$, denote by $\mcC_U$ the slice category of $\mcC$ over $U$.

\begin{definition}
	\label{def:Site}
	A {\emph{site}} $\site{C} \coloneqq (\cat{C},\cov{C})$ is given by a category $\mcC$ and, for every $U \in \mcC$, a
	collection $\site{C}(U)$ of families of objects in $\mcC_U$ which satisfies
	\begin{enumerate}
		\item \label{item:Site 1}For any isomorphism $U' \to U$, the family
		consisting of the single morphism $\{U' \to U\}$ is a member of $\site{C}(U)$.
		
		\item \label{item:Site 2}If $\{f_{\alpha} : U_{\alpha} \to U \mid \alpha
		\in A\} \in \site{C}(U)$, then for any morphism $g : V \to U$, the family
		$\{U_{\alpha} \times_U V \to V\mid \alpha \in A\} \in \site{C} (V)$.
		
		\item \label{item:Site 3}If $\{f_{\alpha} : U_{\alpha} \to U \mid \alpha
		\in A\} \in \site{C}(U)$ and, for every $\alpha \in A$, there is a family
		$\{g_{\alpha \beta} : V_{\alpha \beta} \to U_{\alpha} \mid \beta \in
		B_{\alpha} \} \in \site{C}(U_{\alpha})$, then the family $\{f_{\alpha} \circ
		g_{\alpha \beta} : V_{\alpha \beta} \to U \mid \alpha \in A, \beta \in
		B_{\alpha} \} \in \site{C}(U)$.
	\end{enumerate}
	The elements of each $\site{C}(U)$ are called \emph{coverings of $U$ in the site $\site{C}$}, or simply \emph{coverings of $\site{C}$}. We refer to the total collection of coverings of $\site{C}$ by $\text{Cov}(\site{C}) \coloneqq \{\site{C}(U)\mid U \in \cat{C}\}$. We will say \emph{\site{C} is a site on \cat{C}} to indicate that $\site{C}$ is the site $\site{C} = (\cat{C},\cov{C})$. When it is clear from context, we will sometimes refer to both the site and the category by the same symbol.
\end{definition}

\begin{definition}\label{def:Morphism of sites}
	Let $\site{C}$ and $\site{D}$ be sites on $\cat{C}$ and $\cat{D}$, respectively. A \emph{morphism of sites} $f:\site{C} \to \site{D}$ is a functor $f:\cat{C} \to \cat{D}$ such that
	\begin{enumerate}
		\item $\{U_i \xrightarrow{\phi_i} U\} \in \cov(\cat{C})$ implies $\{f(U_i) \xrightarrow{\phi_i} f(U)\} \in \cov(\cat{D})  $,
		\item For any $\{U_i \to U\}_{i \in I} \in \cov(\cat{C})$ and $V\to U$ a morphism in $\cat{C}$, the canonical morphism (from the pullback diagram)
		\[ f(U_i \times_U V) \to f(U_i) \times_{f(U)} f(V)
		\]
		is an isomorphism for every $i \in I$.
	\end{enumerate}
\end{definition}

Given a closure space $(X,c_X)$, we now define the category $\cat{M}_{c_X}$ over which we will build our site.

\begin{definition} Let $(X, c_X)$ be a \v Cech closure space. 
		 Define $\cat{M}_{c_X}$ to be the category whose objects are all subspaces $(V,c_V) \subset (X,c_X)$ such that $V$ is the intersection of a finite number of subsets of $(X,c_X)$ with
		 non-empty $c_X$-interior, and whose morphisms are the inclusion maps between subspaces, i.e. 
		 \begin{align*}
		 	\Ob(\cat{M}_{c_X}) \coloneqq & \left\{ (U,c_U) \subset (X,c_X) \,\middle| \,
		 	\exists \{V_k \subset X \mid i_X(V_k) \neq \varnothing\}_{k=1}^n,
		 	U = \cap_{k=1}^n V_k \right\} \\	
		 	\Hom_{\cat{M}_{c_X}}(V,U)& \coloneqq \begin{cases} \left\{ \iota^V_U:(V,c_V) \hookrightarrow (U,c_U) \right\}, &V \subset U,\\
		 		\varnothing, & V\not \subset U.
		 	\end{cases}
		 \end{align*}
		  Note that $\cat{M}_{c_X}$ includes the empty closure space $(\varnothing,c_\varnothing)$ iff there are two sets with non-empty $c_X$-interior whose intersection is empty.
\end{definition}

We now proceed to constructing the site on $\cat{M}_{c_X}$. We first define interior covers of elements of $\cat{M}_{c_X}$ in terms of morphisms in $\cat{M}_{c_X}$, in order to align our terminology with that of a site.

\begin{definition}
	Let $(X,c_X)$ be a \v Cech closure space, and let $\cat{M}_{c_X}/c_U$ denote the slice category of $\cat{M}_{c_X}$ over $(U,c_U)$. Suppose that $\mathcal{U}$ is a collection of morphisms 
	\[ \left\{\phi^{U_\alpha}_{U}:(U_{\alpha},c_{\alpha})\rightarrow (U,c_U) \right\}_{ \alpha \in A} \subset \Ob(\cat{M}_{c_X}/c_U)\]
	 in $\cat{M}_{c_X}/c_U$, where $A$ is an index set. We say that $\mathcal{U}$ is an \emph{interior cover of $(U,c_U) \in \cat{M}_{c_X}$} iff 
	\begin{equation*}
		U = \bigcup_{\phi^{U_\alpha}_{U} \in \mathcal{U} } i_U\left(\phi^{U_\alpha}_{U} (U_\alpha)\right),
	\end{equation*}
	where $i_U$ is the interior operator of the subspace $(U,c_U) \subset (X,c_X)$.
\end{definition}
The following lemma shows that the collection of interior covers of subspaces in $\cat{M}_{c_X}$ are closed under certain operations.

\begin{lemma}
	\label{prop:Conditions on covers}Let $(X, c_X)$ be a {\v C}ech closure space
	with interior operator $i_X$, and let $(U,c_U) \in \cat{M}_{c_X}$.
	\begin{enumerate}
		\item \label{item:Conditions on covers 1}Let  $\{\phi^{U_\alpha}_{U}:U_{\alpha} \to U\}_{\alpha \in
			A} \subset \cat{M}_{c_X}/c_U$ be an interior cover of $(U, c_U)$, and let $(V,c_V) \in \cat{M}_{c_X}$, $V
		\subset U$. Then $\{\phi^{V \cap U_{\alpha}}_{V}:V \cap U_{\alpha} \to V \}_{\alpha \in A}$ is an interior
		cover of $(V, c_V)$.
		
		\item \label{item:Conditions on covers 2}Let $\{\phi^{U_\alpha}_{U}:U_{\alpha} \to U\}_{\alpha \in
			A} \subset \cat{M}_{c_X}/c_U$ be an interior cover of $(U, c_U)$, and, for each $\alpha \in A$, let
		$\{\phi^{V_{\beta \alpha}}_{U_\alpha} : V_{\beta \alpha} \to U_\alpha \}_{\beta \in B_{\alpha}}\subset \cat{M}_{c_X}/c_{U_\alpha}$ be an interior cover of the subspace
		$(U_{\alpha},c_\alpha)$. Then $\{V_{\beta \alpha} \}_{\alpha \in A, \beta \in B_\alpha}$ is
		an interior cover of $(U,c_U)$.
	\end{enumerate}
\end{lemma}

\begin{proof}
	\begin{enumerate}[wide,labelwidth=!,labelindent=0pt]
		\item Follows from the definition of interior covers, Definition \ref{def:Covers}(\ref{item:Interior cover}), and \autoref{prop:Interior of subspaces}(\ref{item:Subspace neighborhoods}).
		\item From \autoref{lem:Nesting}, for each $\alpha \in A$ we have 
		\begin{equation*}
			i_U(U_\alpha) = \bigcup_{\beta \in B_\alpha} i_U(V_{\beta\alpha}),
		\end{equation*}
		and therefore 
		\begin{equation*}
			U = \bigcup_{\alpha \in A} i_U(U_\alpha) = \bigcup_{\alpha \in A} \bigcup_{\beta \in B_\alpha} i_U(V_{\beta\alpha}),
		\end{equation*}
		as desired.\qedhere
	\end{enumerate} 
\end{proof}

We now observe that the category $\cat{M}_{c_X}$ admits pullbacks.

\begin{lemma}
	\label{lem:Pullbacks}
	Let $(X,c_X)$ be a \v Cech closure space. 
	The category $\cat{M}_{c_X}$ admits pullbacks.  Furthermore, the pullback $(V \times_U W,c_{V\times_U W})$ in the diagram
	\begin{equation*}
		\begin{tikzcd}
			(V \times_U W,c_{V\times_U W}) \ar[d] \ar[r] 
			\arrow[dr, phantom, "\scalebox{1.5}{$\lrcorner$}" , very near start]& (V,c_V) \ar[d] \\
			(W,c_W) \ar [r]& (U,c_U)
		\end{tikzcd}
	\end{equation*}
	in $\cat{M}_{c_X}$ is given by $(V \times_U W,c_{V\times_U W}) = (V \cap W,c_{V \cap W})$.
\end{lemma}

\begin{proof}
	We note that the forgetful functor $F:\cat{M}_{c_X}\to \catname{Set}$ is fully faithful, where $F$ sends each subspace $(U,c_U) \in \cat{M}_{c_X}$ to its underlying set $U$ and is the identity on morphisms. Furthermore, in $\catname{Set}$, the pullback $V \times_U W = V \cap W$. Since $V,W \in \cat{M}_{c_X}$ implies that 
	$V \cap W \in \cat{M}_{c_X}$, the result follows, using the subspace structure $c_{V\cap W}$ on $V \cap W$.
\end{proof}

Combining the above lemmas, we are now ready to construct our site on $\cat{M}_{c_X}$.

\begin{theorem}
	\label{thm:Grothendieck site full}
	Let $(X,c_X)$ be a closure space, and
 	For each $(U,c_U) \in \cat{M}_{c_X}$, define 
	\begin{align*}
		\cov(U,c_U) & \coloneqq \{\mathcal{U} \subset \cat{M}_{c_X}/c_U \mid \mathcal{U} \text{ is an
			interior cover of } (U, c_U) \}, \text{and} \\
			\cov(\cat{M}_{c_X}) & \coloneqq \{ \cov(U,c_U) \mid (U,c_U) \in \cat{M}_{c_X}\}
	\end{align*}
	where $\mcM_{c_X}/c_U$ is the slice category of $\cat{M}_{c_X}$ over $(U,c_U)$. Then $\site{M}_{c_X}:= (\cat{M}_{c_X},\cov(\cat{M}_{c_X}))$ is a site on $\cat{M}_{c_X}$.
\end{theorem}

\begin{proof}
	First, we have that any inclusion $\iota^V_U:(V,c_V)\hookrightarrow (U,c_U)$ which is a homeomorphism $(V,c_V) \xrightarrow{\cong} (U,c_U)$ is itself an
	interior cover of $(U,c_U)$, so Definition \ref{def:Site}(\ref{item:Site 1}) is satisfied.
	
	To see Definition \ref{def:Site}(\ref{item:Site 2}), first note that, by \autoref{lem:Pullbacks}, $(V \times_U U_{\alpha},c_{V\times_U U_\alpha}) =
	(V  \cap U_{\alpha},c_{V\cap U_{\alpha}})$. The conclusion now follows from
	\autoref{prop:Conditions on covers}(\ref{item:Conditions on covers 1}).
	
	Finally, Definition \ref{def:Site}(\ref{item:Site 3}) follows from \autoref{prop:Conditions on
		covers}(\ref{item:Conditions on covers 2}).
\end{proof}

\subsection{The Grothendieck Topology on $\cat{M}_{c_X}$}

In this section, we recall how a site generates a Grothendieck topology and identify the Grothendieck topology generated by the site $\site{\cat{M}}_{c_X}$. We begin by reviewing the definition of sieves and Grothendieck topologies on a category, after which we recall how a site determines a Grothendieck topology.

Let $\mathcal{C}$ be a category, and, as before, for any $U \in \tmmathbf{\tmop{Ob}}
(\mathcal{C})$, we let $\mathcal{C}/U$ denote the slice category of 
$\cat{C}$ over $U$.

\begin{definition}
  Let $U \in \Ob (\mathcal{C})$. A {\emph{sieve}} $S$ over $U$ is a subset of
  $\Ob (\cat{C}/U)$ such that, if $V \to U \in S$, then the composition $W \to V
  \to U \in S$ for any $W \to V \in \tmop{Hom}_{\mcC} (W, V)$.
\end{definition}

\begin{definition}[\cite{stacks-project}, \href{https://stacks.math.columbia.edu/tag/00Z4}{00Z4}]
  A {\emph{Grothendieck topology}} $J = \{ \mcS (U)\}_{U \in \Ob (\mcC)}$ on a
  category $\mcC$ is a collection of sieves $\mcS (U)$ for each object $U \in
  \Ob (\mcC)$, such that
  \begin{enumerate}
    \item The maximal sieve on $U$ is in $\mcS (U)$, i.e. $\Ob (\mcC_U) \in \mcS
    (U)$
    
    \item Let $V \to U \in \Hom_\mcC(V,U)$. If $S \in \mcS (U)$, then $S
    \times_U V \in \mcS (V)$, where we define
    \[ S \times_U V \assign \{W \to V \mid \text{the composition } W \to V \to
       U \in \mcS (U)\} \]
    \item Let $S$ and $S'$ be sieves over $U$. Assume that $S' \in \mcS (U)$
    and that $S \times_U V \in \mcS (U)$ for any $(V \to U) \in S'$. Then $S \in
    \mcS (U)$.
  \end{enumerate}
  A sieve $S$ over $U$ is called a {\emph{covering sieve of $J$}} if $S \in \mcS
  (U)$.
\end{definition}

Given a site $\mcC$, we construct the Grothendieck topology generated by the site in the following way. We start with the following definition.

\begin{definition}
	Given $\{f_i:U_i\to U\}_{i\in I}$ a family of elements of $\mathcal{C}_U$ (morphisms of $\mcC$ with target $U$). We define the \emph{sieve $S$ on $U$ generated by the morphisms $f_i$} to be the 
	collection of morphisms $g:V\to U$ which factor through one of the $f_i$, i.e. such that there exists a morphism $g':V \to U_i$ with $g = f_i \circ g': V \xrightarrow{g'} U_i \xrightarrow{f_i} U$.
\end{definition}

The Grothendieck topology generated by a site is given by the following definition.

\begin{definition}[\cite{stacks-project}, \href{https://stacks.math.columbia.edu/tag/00Z5}{Lemma 00ZC},
\href{https://stacks.math.columbia.edu/tag/00ZD}{Definition 00ZD}]
	Let $\site{C}$ be a site with coverings $\cov(\cat{C})$. The topology associated to $\site{C}$ is the topology $J$ described by the following: For every object $U$ of $\cat{C}$, we let $J(U)$ be the set of all sieves $S$ such that there exists a covering $\{f_i:U_i \to U\}_{i \in I} \in \site{C}(U)$ for which the sieve $S'$ generated by the $f_i$ is contained in $S$.
\end{definition}

In other words, every sieve $S \in J(U)$ in the topology $J$ associated to a site $\site{C}$ contains a sieve $S'$ generated by a covering $\{f_i: U_i \to U\}_{i\in I} \in \site{C}(U)$ in the $\site{C} = (\cat{C},\cov(\cat{C})$.

\begin{definition}\label{def:Topology generated by site} Given a closure space $(X,c_X)$, we denote by $J_X$ the topology generated by the site $\site{M}_{c_X}$.
\end{definition}

\subsection{Sheaves on the Site $\site{\cat{M}}_{c_X}$}

In this section, we recall the definition of a sheaf on a site, state an equivalent formulation for sheaves on the site $\site{\cat{M}}_{c_X}$ in terms of sections of interior covers of a \v Cech closure space $(X,c_X)$, and briefly indicate how the functors $f_*, f^*, \otimes,$ and $\hom$ are defined for sheaves on $\cat{M}_{c_X}$. 

We begin with the definition of a sheaf on a site.

\begin{definition}[\cite{stacks-project}, \href{https://stacks.math.columbia.edu/tag/00VM}{00VM}]
	\label{def:Sheaves}
	
	Let $\site{C} = (\cat{C},\cov(\cat{C}))$ be a site. A \emph{presheaf on $\site{C}$ with values in $\mathcal{A}$} is a functor $F:\cat{C}^{\text{op}} \to \cat{A}$. A presheaf $F$ on $\site{C}$ is a \emph{sheaf} iff for any cover $\{U_i \to U\}_{i \in I} \in \cov(\cat{C})$, the diagram
	\[
	\begin{tikzcd}[column sep=large] F(U)\arrow[r,"(\phi_i)_{i\in I}"] &  \prod F(U_i) \arrow[r,"(\phi_{ji})_{i,j\in I}",shift left] \arrow[shift right]{r}[swap]{(\phi_{ij})_{i,j\in I}}& \prod F(U_i \times_U U_j)\end{tikzcd}
	\]   
	is an equalizer, where the $\phi_i:F(U)\to F(U_i)$ and $\phi_{ij}:F(U_i) \to F(U_i \times_U U_j)$ are the respective restriction maps.
\end{definition}

The following proposition gives an equivalent formulation of this definition as a generalization of the more classical definition of sheaves on topological spaces.

\begin{proposition}	
	\label{prop:Closure sheaf definition}
	Let $(X,c_X)$ be a closure space. A presheaf $F:\cat{M}_{c_X}^{\text{op}} \to \mathcal{C}$ on the site $(\cat{M}_{c_X},J)$ is a sheaf iff, for every interior cover $\mathcal{U} = \{ U_i \to X \mid i \in I\}$ of $(X,c_X)$, and every collection of sections $\{s_{U_i} \in F(U_i) \mid i \in I\}$ such that the restriction maps $\rho^{U_i}_{U_{ij}}, \s\rho^{U_j}_{U_{ij}}$ agree on every $U_{ij} = U_i \cap U_j$, then there is a unique global section $s_X \in F(X)$ such that $\rho^X_{U_i}(s_X) = s_{U_i}$ for every $U_i \in \mcU$.
\end{proposition}

\begin{proof} Immediate from Definition \ref{def:Sheaves} and the construction of the site $\site{\cat{M}}_{c_X} = (\cat{M}_{c_X},\cov(\cat{M}_{c_X})$ \autoref{thm:Grothendieck site full}
\end{proof}

\begin{remark} Given any \v Cech closure space $(X,c_X)$, the Grothendieck topology $J_X$ generated by the site constructed in \autoref{thm:Grothendieck site full} provides a canonical Grothendieck topology on the category $\cat{M}_{c_X}$. Sheaves may now be constructed via sheafification of any presheaf  on $\cat{M}_{c_X}$. We refer the reader to \cite{MacLane_Moerdijk_1994}, Sections III.4-5, or \cite{Kashiwara_Schapira_2006}, Sections 17.3-4 for a full description of the sheafification construction.
\end{remark}

\subsubsection{Operations on sheaves and presheaves}
We now briefly discuss four of the six Grothendieck functors for sheaves on closure spaces, namely $f^*,f_*,\otimes,$ and $\sheafhom$.

Let $(X,c_X)$ and $(Y,c_Y)$ be closure spaces. Given a continuous function $f:(X,c_X) \to (Y,c_Y)$, $f$ induces a morphism of the sites $\tilde{f}:\site{M}_{c_Y} \to \site{M}_{c_X}$ by $\tilde{f}(V) = f^{-1}(V)$ for any $V \in Y$. (We leave it to the reader to check that this is a morphism of sites, although it will be enough for the moment that $\tilde{f}$ merely be a functor.) We now define the functor $\tilde{f}^p:\catname{PSh}(\cat{M}_{c_X}) \to \catname{PSh}(\cat{M}_{c_Y})$ between categories of abelian presheaves by $\tilde{f}^p F(V) = F(\tilde{f}(V)) = F(f^{-1}(V))$ on objects, and a morphism of presheaves $v:F \to F'$ in $\catname{PSh}(\cat{M}_{c_X})$, induces a morphism $\tilde{f}^p v:\tilde{f}^p F \to \tilde{f}^p F'$ in $\catname{PSh}(\cat{M}_{c_Y})$ by $(\tilde{f}^p v)_{V} \tilde{f}^pF(V) = \tilde{f}^p (v_{\tilde{f}(V)}(F(\tilde{f}(V))) = v_{f^{-1}(V)}(F(\tilde{f}^{-1}(V)))$. It is a classical fact that $\tilde{f}^p$ has a left adjoint, which we call $\tilde{f}_p$ (see, for instance, \cite{Tamme_1994}, Theorem 2.3.1 for details).

We use the functors $\tilde{f}_p \dashv \tilde{f}^p$ to define the adjoint functors $\tilde{f}_s:\catname{Sh}(\site{\cat{M}}(c_Y))) \to \catname{Sh}(\cat{M}(c_X))$ 
and $\tilde{f}^s:\catname{Sh}(\cat{M}(c_X))) \to \catname{Sh}(\cat{M}(c_Y))$ in the following way. For a category $\cat{C}$, let $i_{\cat{C}}:\catname{Sh}(\cat{C}) 
\hookrightarrow \catname{PSh}(\cat{C})$ denote the inclusion of the category $\catname{Sh}(\cat{C})$ of sheaves on $\cat{C}$ into the category 
$\catname{PSh}(\site{\cat{C}})$ of presheaves on $\cat{C}$, and we let $\sheafify_\cat{C}:\catname{PSh}(\cat{C}) \to \catname{Sh}(\cat{C})$ be the 
sheafification functor, which (by \cite{Tamme_1994}, Theorem 3.1.1) is the left adjoint of $i_\cat{C}$. We now define
\begin{align*}
	\tilde{f}_s & \coloneqq \sheafify_{\cat{M}_{c_X}} \circ \tilde{f}_p \circ i_{\cat{M}_{c_Y}}\\
	\tilde{f}^s & \coloneqq \sheafify_{\cat{M}_{c_Y}} \circ \tilde{f}^p \circ i_{\cat{M}_{c_X}}
\end{align*} 
Finally, as noted in \cite{Tamme_1994}, Example 3.6.4, when $(X,c_X), (Y,c_Y)$ are topological spaces, $f_* = \tilde{f}^s$ and $f^* = \tilde{f}_s$, where $f_*$ and $f^*$ are defined as in \cite{Godement_1958}, Chapter II, 1.12-1.13. We now take the equalities $f_* = \tilde{f}^s$ and $f^* = \tilde{f}_s$ as the definitions of $f^*$ and $f_*$ for sheaves on closure spaces.

For the functors $\otimes$ and $\sheafhom$, the definition is as in the topological case.

\begin{example}
	Let $Q = (V,E,s,t)$ be a quiver, and let $F:\cat{M}^{\text{op}}_{c_G} \to \cat{C}$ be an abelian sheaf on the closure space $(V,c_G)$ induced by the graph $G = (V,E_Q)$, as defined in Example \ref{ex:Closure spaces}\ref{ex:Quiver 1}. Let $(E,c_Q)$ and $(E,c_{Q,-})$ closure spaces on the edges of the quiver as in Examples \ref{ex:Closure spaces}\ref{ex:Quiver 1} and \ref{ex:Closure spaces}\ref{ex:Quiver 2}. Then $s^*F,t^*F$, and $s^*F \otimes t^* F$ are sheaves on $(E,c_Q)$. Similarly, let $G:\cat{M}_{c_Q}^{op} \to \cat{C}$ and $G_-:\cat{M}_{c_{Q,0}}^{op} \to \cat{C}$ be sheaves on $(E,c_Q)$ and $(E,c_{Q,-})$, respectively. Then $s_*G, t_*G, s_* G_-, t_* G_-, s_*G \otimes t_* G$, and $s_* G_- \otimes s_* G_-$ are sheaves on $(V,c_Q)$.
\end{example}

\section{Sheaf Cohomology on $\site{\cat{M}}_{c_X}$}
\label{sec:Cohomology}
In this section, we recall the definition of sheaf cohomology, and we show that, for a topological space $(X,\tau_X)$, there is a natural morphism of sites between the site of open sets $\site{\mcO}_{\tau_X}$ and the site $\site{\mcM}(c_{\tau_X})$ of the topological space seen as a closure space, and, furthermore, we show that this morphism induces an isomorphism in sheaf cohomology. Finally, we recall several results which follow from analyzing the spectral sequence from \v Cech to sheaf cohomology, and we use these to produce examples of non-topological \v Cech closure spaces with non-trivial sheaf cohomology in dimensions one and two.

For the remainder of the article, we assume that all sheaves take values in an abelian category $\cat{A}$ with enough injectives.

\subsection{Sheaf Cohomology}
\label{subsec:Sheaf Cohomology} The cohomology of a closure space $(X, c_X)$ with coefficients in a sheaf
$\s{A}$ is now defined as follows.

\begin{definition}
	Let $\catname{Sh} (c_X,\mcA)$ denote the category of sheaves on $(X,
	c_X)$ with values in an abelian category $\mathcal{A}$ with enough injectives, and let $\s{A} : \mcM (X,
	c)^{\text{op}} \to \mathcal{A}$ be a sheaf on $(X, c_X)$. Let $\Gamma_X : \catname{Sh} (X, c_X) \to \mcA$ denote the global section functor on $(X,c_X)$. We
	define the $q$-th cohomology group of $(X, c_X)$ with coefficients in $\s{A}$
	to be the functor
	$H^q  (X ; \s{A}) \assign R^q \Gamma_X (\s{A}),$
	where $R^q\Gamma_X$ is the $q$-th right derived functor of the global section functor $\Gamma_X$.
\end{definition}

\begin{remark} As in the topological case, it follows from general results of homological
algebra \cites{Godement_1958,Tamme_1994} that the cohomology groups $H^{\ast}  (X ; \s{A})$ may be computed
from $\Gamma (\s{L}^{\ast})$, where $\s{L}^{\ast}$ is an acyclic resolution of
$\s{A}$. 
\end{remark}

We now proceed to show that the sheaf cohomology on a topological space $(X,c_\tau)$ is essentially independent of whether it is constructed using sheaves on the site $\site{\cat{O}}_\tau$ of open covers or on the site $\site{\cat{M}}_{c_X}$ of interior covers.  We first recall the following theorem from \cite{Tamme_1994}.

\begin{theorem}[\cite{Tamme_1994}, Corollary 3.9.3] \label{thm:Comparison} Let $i:\site{C}'\to \site{C}$ be a morphism of sites such that
	\begin{enumerate}
		\item The functor $i$ is fully faithful, and
		\item For any $U' \in \cat{C}'$ and each covering $\{f_j:V_j \to i(U')\}_{j \in J} \in \cov(\cat{C})$, there exists a covering $\{g_{j'}:U'_{j'} \to U'\}_{j'\in J'} \in \cov(\cat{C}')$ such that $\{i(U'_{j'}) \to i(U')\}_{j'\in J'} \in \cov(\cat{C})$ refines $\{f_j:V_j \to i(U')\}_{j \in J}$.
	\end{enumerate}
	Suppose that $U' \in \site{C}'$. For all abelian sheaves $\s{A} $ on $\site{C}$ and $\s{A}'$ on $\site{C}'$, we have
	\begin{align*}
		H^*_{\site{C}'}(U',i^s\s{A}) & \cong H^*_{\site{C}}(i(U'),\s{A}) \text{ and} \\
		H^*_{\site{C}'}(U',\s{A}') & \cong H^*_{\site{C}}(i(U'),i_s \s{A}'). 
	\end{align*}
	
\end{theorem}

Our theorem now follows easily.

\begin{theorem}
	Let $(X,c_\tau)$ be a topological closure space. Denote by $H^q_\mcO(X;F)$ the sheaf cohomology of the topological space $(X,\tau)$ on the site of open sets $\mcO$ with open covers $K_\mcO$. Denote by $f:\mcO(X,\tau) \hookrightarrow \mcM(X,c_\tau)$ the inclusion of sites. For any sheaf $\s{G}$ on the site $(\cat{M},\cov(\cat{M}_{c_X}))$, $H^q_\mcO(X;f^* \s{G}) \cong H^q(X;\s{G})$ and for any sheaf $\s{F}$ on the site $(\mcO,K_\mcO)$, $H^q_\mcO(X,\s{F}) \cong H^q(X,f_* \s{F})$.
\end{theorem}

\begin{proof}
	 Every open cover is an interior cover, and every interior cover on a topological space is refined by the open cover formed by the interiors of the sets in the interior cover, so the hypotheses of \autoref{thm:Comparison} are satisfied. The result now follows from \autoref{thm:Comparison}.
\end{proof}

\subsection{{\v C}ech Cohomology}

We now recall the definition of {\v C}ech cohomology with coefficients in an abelian
presheaf, given a site.
We then use this to provide a class of examples of non-topological
closure spaces induced by graphs for which the sheaf cohomology of the constant sheaf $\underline{\Z}$ is
non-trivial in dimension two, which, in particular, demonstrates that the sheaf cohomology of a graph
viewed as a closure space is different to its sheaf cohomology viewd as a topological space.

\begin{definition}
	Let $(X, c_X)$ be a {\v C}ech closure space, let $F:\cat{M}_{c_X}^{\text{op}} \to \cat{C}$ be an abelian presheaf on $(X, c_X)$, and suppose that
	$\mathcal{U}$ an interior cover of $(X, c_X)$. We define
	\[ H^0  (\mathcal{U}, F) \assign \ker \left( \prod_{U_{\alpha} \in
		\mathcal{U}} F (U_{\alpha}) \rightrightarrows \prod_{U_{\beta},
		U_{\gamma} \in \mathcal{U}} F (U_{\beta} \times_X U_{\gamma}) \right) \]
	For each $q > 0$, we define the $q$-th {\v C}ech cohomology group of the
	cover $\mathcal{U}$ of $(X, c_X)$ with coefficients in $F$ by
	\[ H^q  (\mathcal{U}; F) \assign R^q H^0  (\mathcal{U}, F), \]
	the $q$-th right derived functor of $H^0  (\mathcal{U}, \cdot)$ applied to
	$F$.
\end{definition}

We now recall that the cohomology groups $H^q(\cat{U};F)$ of an interior cover
$\mathcal{U}$ with coefficients in the presheaf $F$ may be identified with the cohomology of the following complex.

\begin{definition}
	Let $(X, c_X)$ be a {\v C}ech closure space, $\mathcal{U}= \{U_{\alpha}
	\}_{\alpha \in A}$ be an interior cover on $(X, c_X)$, and suppose that $F$ is
	an abelian presheaf on $(X, c_X)$. Let
	\[ U_{\alpha_0, \ldots, \alpha_q} \assign U_{\alpha_0} \cap \cdots \cap
	U_{\alpha_q} . \]
	For each integer $q \geq 0$, we define
	\[ C^q (\mathcal{U}, F) \assign \prod_{(\alpha_0, \ldots, \alpha_q) \in A^{q
			+ 1}} F (U_{\alpha_0 \cdots \alpha_q}) . \]
	We additionally define the codifferential $d^q : C^q (\mathcal{U}, F) \to
	C^{q + 1} (\mathcal{U}, F)$ by
	\[ (d^q s)_{i_0, \ldots, i_{q + 1}} = \sum_{k = 0}^{q + 1} (- 1)^k F
	(U_{i_0, \ldots, \hat{i}_k, \ldots, i_{q + 1}}) . \]
\end{definition}

Since $d^2 = 0$, $C^{\ast} (\mc{U}, F)$ is a cochain complex. Its homology is
given by the following theorem.

\begin{theorem}
	[{\cite{Tamme_1994}}, Theorem 2.2.3] For every abelian presheaf $F$ on a {\v
		C}ech closure space $(X, c_X)$, and for every interior cover $\mathcal{U}$ of
	$(X, c_X)$, the group $H^q  (\mathcal{U}, F)$ is canonically isomorphic to the
	$q$-th cohomology group of the complex $C^{\ast} (\mathcal{U}, F)$.
\end{theorem}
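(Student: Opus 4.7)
The plan is to apply the general \v Cech-to-derived-functor comparison theorem (\cite{Tamme_1994}, Theorem 2.2.3) in the setting of the site $(\mcL(X,c), T)$ whose Grothendieck topology is generated by the $i$-covers constructed above. Since the category $\catname{PSh}(X,c)$ of abelian presheaves is an abelian category with enough injectives, and $H^0(\mathcal{U}, -)$ is left exact, its right derived functors $R^q H^0(\mathcal{U}, -) = H^q(\mathcal{U}, F)$ are well-defined, and the task is to show that the explicit complex $C^*(\mathcal{U}, F)$ computes them.

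First I would observe that in degree zero the claim is immediate: by the formula for the codifferential $d^0$, the kernel of $d^0 : C^0(\mathcal{U}, F) \to C^1(\mathcal{U}, F)$ is precisely the equalizer defining $H^0(\mathcal{U}, F)$. Next I would verify that $F \mapsto H^q(C^*(\mathcal{U}, F))$ is a cohomological $\delta$-functor on $\catname{PSh}(X,c)$. This follows because $C^q(\mathcal{U}, -) = \prod_{(\alpha_0, \ldots, \alpha_q) \in A^{q+1}} (-)(U_{\alpha_0 \cdots \alpha_q})$ is a product of evaluation functors on presheaves, each of which is exact, so short exact sequences of presheaves yield short exact sequences of \v Cech cochain complexes and the snake lemma supplies the required connecting maps and long exact sequences.

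The main obstacle is effaceability: showing that $H^q(C^*(\mathcal{U}, I)) = 0$ for $q > 0$ whenever $I$ is injective in $\catname{PSh}(X,c)$. To accomplish this I would introduce, for each $V \in \mcL(X,c)$, the free abelian presheaf $\Z_V$ represented by $V$, so that by the Yoneda lemma $\Hom_{\catname{PSh}}(\Z_V, F) \cong F(V)$ for every presheaf $F$. One then constructs an exact ``cover resolution''
\begin{equation*}
    \cdots \to \bigoplus_{(\alpha_0, \alpha_1)} \Z_{U_{\alpha_0 \alpha_1}} \to \bigoplus_{\alpha_0} \Z_{U_{\alpha_0}} \to \Z_{\mathcal{U}} \to 0,
\end{equation*}
whose exactness is a purely combinatorial bar-construction argument over the indexing set $A$, independent of any topological data. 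Applying $\Hom_{\catname{PSh}}(-, I)$ to this resolution recovers $C^*(\mathcal{U}, I)$ up to sign, and since $I$ is injective the functor $\Hom_{\catname{PSh}}(-, I)$ is exact, giving the required vanishing in positive degrees.

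Finally, I would invoke the standard comparison theorem: two cohomological $\delta$-functors from $\catname{PSh}(X,c)$ to abelian groups that agree naturally in degree zero and whose higher terms are both effaceable are canonically naturally isomorphic in every degree. Since $R^q H^0(\mathcal{U}, -)$ is universal by construction and $H^q(C^*(\mathcal{U}, -))$ is effaceable by the preceding step, the canonical isomorphism $H^q(\mathcal{U}, F) \cong H^q(C^*(\mathcal{U}, F))$ follows for all $q$ and all $F$. The essential technical difficulty is concentrated entirely in the effaceability step, i.e. in the construction and exactness of the cover resolution of free presheaves; the remaining pieces are formal applications of homological algebra, and make no use of the closure-space structure beyond the fact that the intersections $U_{\alpha_0 \cdots \alpha_q}$ are objects of $\mcL(X,c)$.
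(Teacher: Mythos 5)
Your proposal is correct, and it is essentially the argument of the cited source: the paper itself gives no proof of this statement, deferring entirely to Tamme's Theorem 2.2.3, whose proof proceeds exactly as you describe (agreement in degree zero, exactness of the evaluation functors making $H^q(C^*(\mathcal{U},-))$ a $\delta$-functor, and effaceability via the exact complex of representable presheaves $\bigoplus_{(\alpha_0,\dots,\alpha_q)}\Z_{U_{\alpha_0\cdots\alpha_q}}$ together with Yoneda and the exactness of $\Hom(-,I)$ for injective $I$). You also correctly identify the one point worth making explicit in this setting, namely that the whole argument uses only that the intersections $U_{\alpha_0\cdots\alpha_q}$ lie in $\mcL(X,c)$ and is independent of the Grothendieck topology generated by the $i$-covers.
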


The interior covers of $(X, c_X)$ form a directed set, where we write $\cal{U} <
\cal{V}$ iff $\cal{V}, \cal{U}$ are interior covers of $(X, c_X)$ and $\cal{V}$
refines $\cal{U}$. Furthermore, if $\cal{U} < \cal{V}$, there exists a
well-defined homomorphism $H^{\ast}  (\mathcal{U} ; F) \to H^{\ast}  (\mathcal{V} ;F)$, and we may therefore make the following definition.

\begin{definition}
	$\check{H}^q  (X ; F) \assign \underrightarrow{\lim} \,H^q  (\mathcal{U}; F)$,
	where the limit is taken over the the directed set of interior covers $\mathcal{U}$ of $X$.
	$\check{H}^{\ast}  (X ; F)$ is called the {\emph{{\v C}ech cohomology of
			$(X, c_X)$ with coeffecients in the presheaf $F$.}}
\end{definition}

We now recall several results which we will use to compute the sheaf cohomology of our examples below.

\begin{theorem}
	[{\cite{Tamme_1994}}, Corollary 3.4.6]\label{thm:Edge morphisms 1}For all
	abelian sheaves $F$ on a closure space $(X, c_X)$, the homomorphism
	\[ \check{H}^p  (X; F) \to H^p  (X, F) \]
	is a bijection for $p = 0, 1$ and an injection for $p = 2$.
\end{theorem}

\begin{theorem}[\cite{Tamme_1994}, Corollary 3.4.7]\label{thm:Edge morphisms 2}
	Let $\mathcal{U} = \{U_i \to X\}_{i\in I}$ be an interior covering of the closure space $(X,c_X)$, and let $F$ be
	an abelian sheaf such that $H^q(U_{i_0} \times_X \cdots \times_X U_{i_q}; F) = 0$ for all $q > 0$ and
	all $(i_0,\dots,i_q) \in I^{q+1}$. Then
	\begin{equation*}
		H^p(\mathcal{U};F) \cong H^p(X,F)
	\end{equation*}
	for all $p$.
\end{theorem} 

We now use the above to give an example of a non-topological closure space with non-trivial sheaf cohomology in dimensions one and two. We denote by $\underline{\Z}$ the constant sheaf given by the sheafification of the constant presheaf $\Z$. We start with the following definitions.

\begin{definition}
	Let $(X,c_\tau)$ be a topological closure space, i.e. such that with $c_\tau^2 = c_\tau$, and suppose that $\cat{U} = \{U_i \subset X\}_{i \in I}$ is an interior cover of $(X,c_X)$. We say that the cover
	$i(\cat{U}) \coloneqq \{i_\tau(U) \mid U \in \cat{U}\}$ is the \emph{open refinement of $\cat{U}$}. 
\end{definition}

\begin{definition}
	A topological space $X$ is \emph{locally contractible} iff any open subset $U \subset X$ has an open cover $\{U_i\}_{i\in I}$ by open
	subsets $U_i \subset U$ which are contractible in $U$.
\end{definition}

\begin{remark}
	Note that the open refinement $i(\cat{U})$ of an interior cover $\cat{U}$ on a topological space is an open cover.
\end{remark}

\begin{lemma}
	\label{lem:Mapping lemma}
	Let $(X,c_X)$ be a closure space, and let $F:\cat{M}^{\text{op}}_{c_X} \to
	\cat{A}$ be a constant abelian presheaf on $(X,c_X)$. Suppose that
	$(Y,c_\tau)$ is a paracompact topological Hausdorff space.
	Suppose $(X,c_X)$ has a maximal interior cover $\mathcal{U}_X$
	(where interior covers are partially ordered by refinements),
	and suppose that there exists a continuous map $f:(Y,c_\tau)
	\to (X,c_X)$ such that the open refinement of
	$f^{-1}(\mathcal{U}_X)$ is a good cover on
	$(Y,c_Y)$ and that the induced map
	$f^*:C_X^*(\mathcal{U}_X,F) \to C_Y^*(i^{-1}\circ
	f^{-1}(\mathcal{U}_X),i^*f^*F)$ is a quasi-isomorphism.
	Then $\check{H}^*(X,F) \cong \check{H}^*(Y,i^*f^*F) \cong H(Y,i^*f^*F)$.         
\end{lemma}

\begin{proof}
	Since $f^*$ is a quasi-isomorphism by hypothesis, we have 
	$H_X^*(\mathcal{U}_X;F) \cong H_Y^*(i^{-1}\circ
	f^{-1}(\mathcal{U}_X);i^*f^*F)$. However, $\mathcal{U}_X$ is a maximal interior cover on $X$ and $i^*f^*\mathcal{U}_X$ is a good cover by hypothesis. Furthermore, $i^*f^*F$ is constant since $F$ is constant. We therefore have
	\begin{align*}
		\check{H}^*(X;F) &\cong H^*(\mathcal{U}_X;F) \cong  H^*(i^{-1}f^{-1}(\mathcal{U}_X);i^*f^*F) \cong \check{H}^*(Y;i^*f^*F) \\
		& \cong H(Y,i^*f^*F),
	\end{align*}
	where the last isomorphism follows from \autoref{thm:Edge morphisms 2}, in which the fact that $Y$ is a paracompact Hausdorff space and the sheaf $i^*f^*F$ is constant guarantees that the hypotheses of \autoref{thm:Edge morphisms 2} are satisfied.
\end{proof}

\begin{lemma}
	\label{lem:Torus lemma}
	Let $\Z_n \coloneqq \Z/n\Z$, and endow $\Z_n$ with the ``nearest neighbor'' closure structure $c_{\Z_n}(k) = \{k-1,k,k+1\} \mod n$. If $n\geq 6$, then $\check{H}^*(\Z^k_n;\underline{\Z}) \cong H_{sing}^*(\T^k;\Z)$, where $\T^k$ is the $k$-dimensional topological torus and $H_{sing}^*$ is singular cohomology.
\end{lemma}

\begin{proof}	
	Let $f:S^1 \to \Z_n$ be the map $f(x) = i$ for $x \in [i-1/(2n),i+1/(2n))$. Denote by $f^k:\T^k \to \Z^k_n$ the product map $(f,\dots,f)$ from the $k$-dimensional torus to $\Z^k_n$. Let $\mathcal{U}\coloneqq \{\{k-1,k,k+1\} \mod n \}$ denote the maximal interior cover on $\Z_n$ (where the interior covers are ordered by refinements), and define the maximal interior cover on $\Z^k_n$ by $\mathcal{U}^k \coloneqq \{ U_1 \times U_2 \times \cdots \times U_k \mid U_i \in \mathcal{U}\}$. Then $i^{-1}f^{-1}(\mathcal{U}^k)$ is a good cover on $\T^k$, and a finite number of sets $V_1,\dots,V_m \in \mathcal{U}^k$ intersect in $X$ iff the preimages $f^{-1}(V_1) \, \dots,f^{-1}(V_m)$ intersect in $Y$. It follows that the induced map $f^*:C_X^*(\mathcal{U}_X,\underline{\Z}) \to C_\T^*(i^{-1}\circ
	f^{-1}(\mathcal{U}_X),i^*f^*\underline{\Z})$ is an isomorphism and commutes with the codifferential, and therefore induces an isomorphism on homology. By \autoref{lem:Mapping lemma}, $\check{H}^*(\Z^k_n;\underline{\Z}) \cong H^*(\T;\underline{\Z})$. However, since $\T$ is a locally contractible, paracompact, topological Hausdorff space, $H^*(\T;\underline{\Z}) \cong H^*_{sing}(\T;\Z)$, since \v Cech and sheaf cohomology of $\T$ are isomorphic by \cite{Godement_1958}, Theorem 5.10.1, and \v Cech cohomology and singular cohomology of $\T$ are isomorphic by \cite{Spanier_1966}, Corollary 6.9.5 and Corollary 6.8.8. 
	The result now follows.
\end{proof}

Combining these lemmas with \autoref{thm:Edge morphisms 1}, we have

\begin{theorem}
	Let $\Z_n$ and $c_{\Z_n}$ be as in \autoref{lem:Torus lemma}. For any $n\geq 6$, $H^1(\Z^k_n, \underline{\Z}) \cong H_{sing}^1(\mathbb{T}^k,\Z)$ and
	$H_{sing}^2(\T^k,\Z) \hookrightarrow H^2(\Z^k_n,\underline{\Z})$,  where $\mathbb{T}^k$ is the $k$-dimensional topological torus.
\end{theorem}

In particular, since the closure spaces $(\Z^k_n,c_{\Z_n})$ are induced by graphs, this gives a class of examples of undirected graphs which have non-trivial sheaf cohomology in dimension two, demonstrating that the
sheaf cohomology of the closure space induced by a graph may be significantly different from the sheaf cohomology of an undirected graph seen as a topological space.

\section*{Acknowledgements}
We are grateful to Carina Curto, Nikola Milicevic, and Nora Youngs for helpful discussions, and 
we're grateful for the opportunity to present preliminary versions of this work at the Northeastern University and the University of Minnesota Topology Seminars,
the Special Session "Bridging Applied and Quantitative Topology" at the 2024 AMS Joint Mathematics Meeting, as well as at the 2023 ICERM semester program ``Math + Neuroscience: Strengthening the Interplay Between Theory and Mathematics". We are also grateful to Justin Curry for the suggestion to try to examine quivers in the context of the theory presented here.

\printbibliography

\end{document}